\newcommand{\mat}[1]{\begin{bmatrix}#1\end{bmatrix}}
\newcommand{\ep}{\varepsilon}
\newcommand{\genlegendre}[4]{%
  \genfrac{(}{)}{}{#1}{#3}{#4}%
  \if\relax\detokenize{#2}\relax\else_{\!#2}\fi
}
\newcommand{\legendre}[3][]{\genlegendre{}{#1}{#2}{#3}}
\newcommand{\eq}[1]{\begin{equation}#1\end{equation}}
\theoremstyle{plain}
\newtheorem{theorem}{Theorem}[section]
\newtheorem{proposition}{Proposition}[section]
\newtheorem{corollary}{Corollary}[section]
\newtheorem{lemma}{Lemma}[section]
\newtheorem{example}{Example}[section]
\newtheorem{remark}{Remark}[section]
\newtheorem{definition}{Definition}[section]
\numberwithin{equation}{section}
\begin{document}
\author{Bora Yalkinoglu} 
\address{CNRS and IRMA, Strasbourg}
\email{yalkinoglu@math.unistra.fr}
\date{\today}
\title{A note on Shintani's invariant}

\maketitle

\begin{abstract}
\noindent
Shintani's celebrated invariants are conjectured to generate abelian extensions of real quadratic number fields, offering a potential solution to Hilbert's 12th problem in that setting. In this note, we derive new expressions for Shintani's invariants by generalizing an observation of Yamamoto, who showed that these invariants - originally formulated using the double sine function - can be expressed in terms of the q-Pochhammer symbol.   
\end{abstract}

\section{Introduction}
\noindent
In his seminal paper \cite{Shintani1977}, Shintani formulated a conjecture stating that certain invariants \eq{X(\mathfrak f) = \prod_{k=0}^{g(\mathfrak f)} \mathcal S(\ep,z_k)\mathcal S(\ep',z_k'),} defined via products of special values of the double sine function $\mathcal S(\omega,z)$, generate abelian extensions of real quadratic number fields. If true, this conjecture would provide a solution to Hilbert's 12th problem in the real quadratic setting. However, the conjecture remains completely open, due to the absence of a geometric framework analogous to that of elliptic curves with complex multiplication, which plays a central role in the classical theory for imaginary quadratic number fields. \\ \\
Despite extensive and convincing numerical evidence supporting Shintani’s conjecture, the double sine function remains mysterious. To date, no non-trivial theoretical explanation has been found for the validity of Shintani's conjecture. \\ \\
The goal of this note is to generalize a key observation by Yamamoto \cite{yamamoto2010factorization}, who showed that Shintani’s invariants $X(\mathfrak f)$ can be expressed as a (limit of a) quotient of $q$-Pochhammer symbols. Specifically, we prove the following (Theorem \ref{maintheorem}):
\begin{theorem}
Let $K=\mathbb Q(\sqrt{d})$ be a real quadratic number field with positive fundamental unit $\ep = \frac{a+b\sqrt{d}}{2} \in \mathcal O_{K,+}^\times$, with $a,b \in \mathbb N$, such that $\langle 1,\varepsilon \rangle_\mathbb Z = \mathcal O _K$. Let $\mathfrak f = (u+v\sqrt{d}) \in I_K$ be a principal ideal. Then there exist $g = g(\mathfrak f) \in \mathbb N$ and $(x,y) \in \mathbb Q ^2$, such that Shintani's invariant $X(\mathfrak f)$ is given by \eq{\label{expression}X(\mathfrak f) = \lim_{n\to \infty}\left\vert \frac{(x,y;\tau_{n-g})_\infty}{(x,y;\tau_{n+g})_\infty} \frac{(x,y;\tau_{-n-g})_\infty}{(x,y;\tau_{-n+g})_\infty} \right\vert,}
where \eq{(x,y;\tau)_\infty = \prod_{k\geq 0}(1-e^{2 \pi i (x\tau+y) e^{2 \pi i k \tau}})} is the $q$-Pochhammer symbol and $\tau_n = \frac{T_{n+1}(a)+i b \sqrt{d}}{T_n(a)}$.
\end{theorem}

\noindent The proof relies on a close link between the arithmetic of real quadratic fields and (minus) continued fractions \cite{zagier2013zetafunktionen,yamamoto2008kronecker}, which provides a natural description of the decomposition data $\{z_k\}_{k\in \{1,..,g\}}$ appearing in $X(\mathfrak f)$. We also utilize an important product formula for the double sine function due to Shintani (Theorem \ref{shintaniproduct}), which enables telescoping expressions for Shintani's invariants. \\
Several corollaries (\ref{corollary1} - \ref{corollary3}) provide alternative formulations and refinements. \\
It is worth noting that the discrete parameter $\tau_n$, defined using classical Chebyshev polynomials $T_n(x)$, appears to be novel in this context. \\ \\
In another direction, recent breakthroughs on Hilbert's 12th problem from a $p$-adic perspective - see \cite{dasgupta2021brumer,darmon2021singular} - highlight the importance of connecting Shintani's archimedean viewpoint (via the double sine function or $q$-Pochhammer symbols) with these developments. Understanding this connection remains a significant challenge. Let us mention a recent cohomological interpretation of Shintani invariants due to Kopp, see \cite{kopp}, as well as the possibly related works \cite{bergeron2023elliptic,gendronI}.
\\ \\
We aim to present our results in as accessible a manner as possible, prioritizing clarity over generality to highlight the key ideas. In forthcoming work, we will further explore these invariants using the cyclic quantum dilogarithm.

\subsubsection*{Summary}
In section 2, we recall Shintani's definition of his invariants using the formalism of minus continued fractions (see \cite{zagier2013zetafunktionen,yamamoto2008kronecker}), which is used to define the decomposition data. In section 3, after recalling important results due to Shintani and Yamamoto, we introduce a discretization of certain modular geodesics and use this to prove our main theorem expressing Shintani's invariants in terms of $q$-Pochhammer symbols. In particular, we explain how one can express Shintani's invariants using a single $q$ parameter. In the appendix, we provide background on Chebyshev polynomials and prove a result concerning the size of the quantity $g(p)$, for $p$ a prime number.

\subsubsection*{Acknowledgements} We would like to thank Giuseppe Ancona for his helpful feedback on the manuscript, and Thomas Dreyfus for interesting discussions on $q$-difference equations. We also wish to express our sincere gratitude to the (anonymous) referees for carefully reading the manuscript and for their valuable comments, which considerably improved this note.
\subsection{Notations}
\noindent
Let $d$ be a positive, square-free integer and $K=\mathbb Q(\sqrt d)$ be a real quadratic number field. We denote by $\mathcal O _K$ the ring of integers of $K$, by $\mathcal O _K ^\times$ the group of units and by $\mathcal O _{K,+}^\times$ the group of totally positive units. The monoid of non-zero integral ideals of $\mathcal O _K$ is denoted by $I_K$. For any $x \in K$, we write $x'$ for its Galois conjugate under the non-trivial automorphism $\sigma \in \mathrm{Gal(K/\mathbb Q)}$. \\ \\
We fix a totally positive fundamental unit $\ep = \frac{a+b\sqrt{d}} 2 \in \mathcal O _{K,+} ^\times$ with $a,b \in \mathbb N$. For an ideal $\mathfrak f \in I_K$, we define $g(\mathfrak f) \in \mathbb N$ to be the smallest positive integer such that \eq{\langle \ep^{g(\mathfrak f)} \rangle = \mathcal O_{K,+}^\times \cap (1+\mathfrak f) ,} and we write $\ep_\mathfrak f = \ep^{g(\mathfrak f)}$. \\ \\
For $r \in \mathbb R$, we define $\langle r \rangle \in \mathbb R$ by $r - \langle r \rangle \in \mathbb Z$ and $0 < \langle r \rangle \leq 1$. In the same way, we define $[r] \in \mathbb R$ by $r - [r] \in \mathbb Z$ and $0\leq [r] < 1$.
 
 \section{Shintani's invariant}
 
 \subsection{Shintani's invariant}
We follow the exposition in \cite{Shintani1977,yamamoto2008kronecker}.
Let $\mathfrak C \in \mathrm{Cl}_K(\mathfrak f)$ be a (strict) ray class of conductor $\mathfrak f \in I_K$. Shintani defined the associated partial zeta function as \eq{\zeta(s,\mathfrak C) = \sum_{\mathfrak a \in \mathfrak C, \mathfrak a \text{ integral}} \mathfrak{N}(\mathfrak a)^{-s}} and introduced the invariant \eq{X(\mathfrak C) = \mathrm{exp}(-\zeta'(0,\mathfrak C) + \zeta'(0,\mathfrak C ')).}
These invariants $X(\mathfrak C)$, known as the Shintani-Stark units, are conjectured to generate (essentially) the maximal abelian extension $K^{ab}$ of $K$. Shintani further expressed these invariants in terms of the double sine function: \eq{\label{doublesine}\mathcal S(\omega,z) = \mathrm{exp}(\zeta_2'(0,1+\omega-z,\omega)-\zeta'_2(0,z,\omega)),}
where the Barnes double zeta function $\zeta_2(s,z,(1,\omega))$ is given by \eq{\zeta_2(s,z,\omega) = \sum_{n_1,n_2=0}^\infty (n_1+n_2 \omega+z)^{-s}.}
\noindent Later, we will need the following basic symmetry (see Proposition 3.3.1 \cite{yamamoto2008kronecker})
\begin{lemma}
\label{symmetry}
The double sine function satisfies \eq{\mathcal S(\omega,z) = 2 \, \mathrm{sin}(\pi z) \, \mathcal S(\omega,z+\omega).}
\end{lemma}
\noindent Shintani proved the following representation:
\eq{X(\mathfrak C) = \prod_k \mathcal S(\ep_k,z_k) \,\mathcal S(\ep_k',z_k'),}
where $\mathcal Z _{\mathfrak C} = \{(\ep_k,z_k)\}$ is a finite set known as the decomposition datum associated to the class $\mathfrak C$.
Following \cite{yamamoto2008kronecker}, we define \eq{\label{ShintaniValues}X_1(\mathfrak C) = \prod_{k}\mathcal S(\ep_k,z_k) \text{ and } X_2(\mathfrak C) = \prod_k \mathcal S(\ep_k',z_k').}
\noindent To simplify the exposition, we restrict to the case where $\mathfrak C$ is the trivial class $1_\mathfrak f \in \mathrm{Cl}_K(\mathfrak f)$. We write $X(\mathfrak f) = X(1_\mathfrak f)$ and $X_i(\mathfrak f) = X_{i}(1_\mathfrak f) $ for $i \in \{1,2\}$. \\ \\
Further, we assume that $\mathfrak f \in I_K$ is a principal ideal and more importantly, we assume that \eq{\langle 1,\ep\rangle_\mathbb Z = \mathcal O _K.} \begin{lemma} There are infinitely many real quadratic number fields $K$, such that $\langle 1,\ep\rangle_\mathbb Z = \mathcal O _K$. \end{lemma}
\begin{proof}
One can take the family $K_m = \mathbb Q(\sqrt{4m^2-1})$, for $m \geq 1$. In this case, $\ep_m = 2m+\sqrt{4m^2-1}$ is a positive fundamental unit which also generates $\mathcal O_{K_m}$.
\end{proof}
\begin{remark}
There are more real quadratic number fields $K=\mathbb Q(\sqrt{d})$ with the property $\langle 1,\ep\rangle_\mathbb Z = \mathcal O_K$, e.g., the property holds for $d =5$ and $d=21$.
\end{remark}
\noindent Under these assumptions, the partial zeta function admits the representation \eq{\zeta(s,1_\mathfrak f) = \sum_{\beta \in (\mu+\mathfrak b)_+ / \langle \ep_\mathfrak f \rangle} \mathfrak N (\beta)^{-s},} with $\mu \in K^\times$ such that \eq{\mathfrak b = \langle 1,\ep\rangle_\mathbb Z = (\mu) \, \mathfrak f.}
\begin{remark}
For $\mathfrak f = (u+v\sqrt{d}) \in I_K$ a principal ideal, we can take \eq{\mu = \tfrac{1}{u+v\sqrt{d}}=\tfrac{u-v\sqrt{d}}{\mathfrak N (\mathfrak f)}.} In particular, if $v=0$, we have $\mu = \frac 1 u$.
\end{remark}
\begin{lemma}
For every real quadratic number field $K$, with our choice of a positive fundamental unit $\ep = \tfrac{a+b\sqrt{d}}{2} \in \mathcal O_{K,+}^\times$, the minus continued fraction expansion of $\ep$ (see \cite{zagier2013zetafunktionen}) has length one, i.e., we have \eq{\ep = \llbracket a \rrbracket=
a-\cfrac{1}{a-\cfrac{1}{a-
  \cdots\vphantom{\cfrac{1}{1}} }}.}
\end{lemma}
\begin{proof}
From our assumption we know $\ep > 1$ and together with $\ep + \tfrac1 \ep = a$, the result follows.
\end{proof}
\noindent In our set-up, Shintani's famous cone decomposition theorem now looks as follows (cf., \cite{yamamoto2008kronecker}):
\begin{proposition}
\label{decomposition}
Under the above assumptions, for each $k \in \mathbb Z$, there exists a unique pair $(x_k,y_k) \in \mathbb Q^2$ with $0<x_k\leq1$,  $0 \leq y_k<1$, such that \eq{x_k \ep^{1-k} + y_k \ep^{-k} \in \mu+\mathfrak b.}
In particular, \eq{(\mu+\mathfrak b)_+ / \langle \ep_\mathfrak f \rangle = \coprod_{k=1}^{g(\mathfrak f)}\{ (x_k+u)\ep^{1-k}+(y_k+v)\ep^{-k} | u,v \in \mathbb Z_{\geq 0}\}.}
\end{proposition}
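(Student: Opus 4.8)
The plan is to deduce both assertions from two facts: that $\{\varepsilon^{1-k},\varepsilon^{-k}\}$ is a $\mathbb{Z}$-basis of $\mathfrak{b}=\langle 1,\varepsilon\rangle$ for every $k$, and that the corresponding half-open cones tile the totally positive cone of $K\otimes\mathbb{R}\cong\mathbb{R}^2$. First I would record that $\mathfrak{b}=\mathbb{Z}[\varepsilon]$ is stable under multiplication by $\varepsilon$ (using $\varepsilon^{2}=a\varepsilon-1$ and $\varepsilon^{-1}=a-\varepsilon\in\mathfrak{b}$), so that $\varepsilon^{-k}\mathfrak{b}=\mathfrak{b}$ and hence $\{\varepsilon^{1-k},\varepsilon^{-k}\}=\varepsilon^{-k}\{\varepsilon,1\}$ is indeed a $\mathbb{Z}$-basis of $\mathfrak{b}$, and simultaneously a $\mathbb{Q}$-basis of $K$. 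Writing $\mu=\xi_k\varepsilon^{1-k}+\eta_k\varepsilon^{-k}$ with $\xi_k,\eta_k\in\mathbb{Q}$ and reducing the coordinates into the half-open box $(0,1]\times[0,1)$ then yields the unique rational pair $(x_k,y_k)$ with $x_k\varepsilon^{1-k}+y_k\varepsilon^{-k}\in\mu+\mathfrak{b}$; this settles the first assertion, since the coset $\mu+\mathfrak{b}$ equals $\{(x_k+u)\varepsilon^{1-k}+(y_k+v)\varepsilon^{-k}\mid u,v\in\mathbb{Z}\}$.

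For the cone decomposition I would pass to the real embedding $x\mapsto(x,x')$, under which $\varepsilon^{j}\mapsto(\varepsilon^{j},\varepsilon^{-j})$ traces the hyperbola $XY=1$ in the open first quadrant, with angle strictly decreasing in $j$ and accumulating only at the two axes. The half-open cones $C_k=\{x\varepsilon^{1-k}+y\varepsilon^{-k}\mid x>0,\ y\geq 0\}$ are then seen to be pairwise disjoint and to exhaust the open first quadrant, i.e. the totally positive elements, because consecutive cones share exactly the ray through $\varepsilon^{-k}$, which the convention $x>0$ assigns to $C_{k+1}$ and not to $C_k$. Intersecting with $\mu+\mathfrak{b}$ and matching the boundary conventions $x_k\in(0,1]$, $y_k\in[0,1)$ against $x>0$, $y\geq 0$, I would check that $(\mu+\mathfrak{b})\cap C_k=\{(x_k+u)\varepsilon^{1-k}+(y_k+v)\varepsilon^{-k}\mid u,v\in\mathbb{Z}_{\geq 0}\}$, so that $(\mu+\mathfrak{b})_+=\coprod_{k\in\mathbb{Z}}(\mu+\mathfrak{b})\cap C_k$.

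It then remains to pass to the quotient by $\langle\varepsilon_{\mathfrak{f}}\rangle=\langle\varepsilon^{g}\rangle$. Here I would first verify that multiplication by $\varepsilon^{g}$ preserves the coset $\mu+\mathfrak{b}$: since $\mathfrak{b}=(\mu)\mathfrak{f}$ gives $\mu+\mathfrak{b}=\mu(1+\mathfrak{f})$, and $\varepsilon^{g}=\varepsilon_{\mathfrak{f}}\in(1+\mathfrak{f})$ is a unit whose inverse also lies in $1+\mathfrak{f}$, one has $\varepsilon^{g}(1+\mathfrak{f})=1+\mathfrak{f}$. Combined with $\varepsilon^{g}C_k=C_{k-g}$, this shows that $\varepsilon^{g}$ maps the piece indexed by $k$ bijectively onto the piece indexed by $k-g$; hence the orbits of the shift $k\mapsto k-g$ on $\mathbb{Z}$ are the residue classes modulo $g$, and $\{1,\dots,g\}$ is a system of representatives. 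Therefore $\coprod_{k=1}^{g}(\mu+\mathfrak{b})\cap C_k$ meets each $\langle\varepsilon^{g}\rangle$-orbit in $(\mu+\mathfrak{b})_+$ exactly once, which is the claimed identity.

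I expect the main obstacle to be purely bookkeeping: getting every strict-versus-weak inequality consistent, so that the half-open cones tile without overlap or gap and the fundamental box $(0,1]\times[0,1)$ matches the cone $\{x>0,\ y\geq 0\}$ on the nose. The only genuinely arithmetic input is the stability $\varepsilon^{g}(1+\mathfrak{f})=1+\mathfrak{f}$, which is where the definition of $\varepsilon_{\mathfrak{f}}$ and the hypothesis $\mathfrak{b}=(\mu)\mathfrak{f}$ enter.
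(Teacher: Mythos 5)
Your proof is correct, and it is essentially the standard argument behind Shintani's cone decomposition: observe that $\{\varepsilon^{1-k},\varepsilon^{-k}\}$ is a $\mathbb{Z}$-basis of $\mathfrak{b}=\mathbb{Z}+\mathbb{Z}\varepsilon$ (so reduction into the box $(0,1]\times[0,1)$ gives the unique $(x_k,y_k)$), tile the totally positive quadrant by the half-open cones $C_k$, and use $\varepsilon^{g}(1+\mathfrak{f})=1+\mathfrak{f}$ together with $\varepsilon^{g}C_k=C_{k-g}$ to cut down to the representatives $k=1,\dots,g$. The paper itself gives no proof of this proposition --- it is quoted from Yamamoto \cite{yamamoto2008kronecker} --- and your argument matches the one found there, with the sign conventions handled correctly.
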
 
\noindent This gives the decomposition datum \eq{\mathcal Z_\mathfrak f = \{(\ep,x_k \ep + y_k) \, \vert \, 0\leq k \leq g(\mathfrak f)-1\}} of the class $1_\mathfrak f$, which leads to \eq{ \label{ShintaniValues2}X_1(\mathfrak f) = \prod_{k=1}^{g(\mathfrak f)} \mathcal S(\ep,x_k \ep + y_k) \text{ and } X_2(\mathfrak f) = \prod_{k=1}^{g(\mathfrak f)} \mathcal S(\ep',x_k \ep' + y_k).}
\begin{remark}
We sometimes abbreviate the decomposition datum as \eq{\mathcal Z_\mathfrak f = \{(x_k,y_k) | k\in \{0,\dots,g(\mathfrak f)-1\}\}.}
\end{remark}
 \begin{remark}
For an arbitrary real quadratic number field $K$, Lemma \ref{epsilonf} shows that for any prime $p \in \mathbb N$ not dividing the discriminant $\Delta$ of $K$, one always has \eq{g(p) \, \vert \, p-  \legendre{\Delta}{p}.}
\end{remark}
\begin{remark}
Define $(\tilde x _k, \tilde y _k) = ([x_k],y_k)$ and set $\tilde z_k = \tilde x _k \ep + y_k$. Then, using Lemma \ref{symmetry}, the slightly modified invariant \eq{\tilde X(\mathfrak f) = \sideset{}{'}\prod_{k=1}^{g(\mathfrak f)} \mathcal S(\ep,\tilde z _k) \, \mathcal S (\ep',\tilde z_k'),} where we ignore the factors with $\tilde z_k = \tilde z_k'=0$, satisfies 
\eq{X(\mathfrak f) \in K^{\mathrm{ab}} \Leftrightarrow \tilde X(\mathfrak f) \in K^{\mathrm{ab}} .}
This comes from $\mathcal S(\ep,0) = 0$ and $\mathcal S(\ep,\ep) = \ep^{-1/2} \in K^{\mathrm{ab}}$, cf., \cite{yamamoto2008kronecker}.
\end{remark}
 \subsection{Explicit decomposition data for principal ideals}
The matrix \eq{U = \begin{bmatrix} a & -1 \\ 1 & 0 \end{bmatrix} \in \mathrm{SL}_2(\mathbb Z)}
encodes the minus continued fraction expansion of $\ep$, since \eq{U \cdot \ep = a-\frac 1 \ep = \ep,} where the action is by Möbius transformation. This implies that the eigenvalues of $U$ are precisely $\{\ep,\ep'\}$. 
Following \cite{yamamoto2008kronecker}, for $k \in \mathbb Z$ and an arbitrary $\mathfrak f \in I_K$, the decomposition data satisfy the recurrence: \eq{\label{xyrecursion}(x_{k+1},y_{k+1}) = (\langle a x_k + y_k \rangle,[-x_k]) = (\langle a x_k -x_{k-1} \rangle,1-x_k),} which corresponds to the matrix relation (modulo $1$) \eq{\begin{bmatrix}x_{k+1} \\ y_{k+1} \end{bmatrix} = U^{T} \begin{bmatrix}x_k \\ y_k\end{bmatrix} = \begin{bmatrix}a & 1 \\ -1 & 0\end{bmatrix} \begin{bmatrix}x_k \\ y_k\end{bmatrix} .}
From Proposition \ref{decomposition}, we thus obtain 
\begin{lemma}
\label{LemmaPeriodic}
For every $k \in \mathbb Z$, we have (upon properly taking modulo $1$) \eq{(U^T)^{g(\mathfrak f)} \begin{bmatrix}x_k \\ y_k\end{bmatrix} = \begin{bmatrix}x_k \\ y_k\end{bmatrix}.} In particular, \eq{(x_0,y_0) = (x_{g(\mathfrak f)},y_{g(\mathfrak f)}).}
\end{lemma}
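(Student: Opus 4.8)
The plan is to deduce the claimed fixed-point identity from the stronger observation that the sequence of decomposition data $(x_k,y_k)$ is genuinely periodic with period $g=g(\mathfrak f)$, i.e. that $(x_{k+g},y_{k+g})=(x_k,y_k)$ as honest pairs of rational numbers, not merely modulo $1$. Granting this periodicity the lemma is immediate: because $U^T$ has integer entries, reduction modulo $1$ commutes with multiplication by $U^T$, so iterating the recursion $(x_{k+1},y_{k+1})^T \equiv U^T (x_k,y_k)^T \pmod 1$ exactly $g$ times yields $(U^T)^g (x_k,y_k)^T \equiv (x_{k+g},y_{k+g})^T \pmod 1$, and periodicity identifies the right-hand side with $(x_k,y_k)^T$.

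The heart of the matter is therefore the periodicity, which I would extract from the uniqueness clause in Shintani's cone decomposition. Set $w_k = x_k \varepsilon^{1-k}+y_k \varepsilon^{-k} \in \mu+\mathfrak b$. The key structural fact is that multiplication by $\varepsilon_\mathfrak f=\varepsilon^{g}$, and hence by its inverse, preserves the coset $\mu+\mathfrak b$. I would verify this directly: writing $\beta=\mu+b$ with $b\in\mathfrak b$, one has $\varepsilon^{-g}\beta = \mu + (\varepsilon^{-g}-1)\mu + \varepsilon^{-g}b$, where $\varepsilon^{-g}b\in\mathfrak b$ because $\mathfrak b=\langle 1,\varepsilon\rangle$ is stable under multiplication by $\varepsilon$ (as $\varepsilon^2=a\varepsilon-1$) and hence under any power of $\varepsilon$, while $(\varepsilon^{-g}-1)\mu \in \mathfrak f\cdot(\mu)=\mathfrak b$ because $\varepsilon^{-g}=\varepsilon_\mathfrak f^{-1}$ lies in $\mathcal O_K^\times\cap(1+\mathfrak f)$ and is therefore $\equiv 1 \pmod{\mathfrak f}$.

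With this in hand I would compute $\varepsilon^{-g}w_k = x_k \varepsilon^{1-(k+g)} + y_k \varepsilon^{-(k+g)}$, which again lies in $\mu+\mathfrak b$ by the stability just established. This exhibits an element of $\mu+\mathfrak b$ written in the normal form $x\,\varepsilon^{1-(k+g)}+y\,\varepsilon^{-(k+g)}$ with admissible coefficients $0<x_k\leq 1$ and $0\leq y_k<1$. Comparing with the defining property of $(x_{k+g},y_{k+g})$ and invoking the uniqueness half of the cone decomposition proposition at index $k+g$ forces $(x_{k+g},y_{k+g})=(x_k,y_k)$, which is exactly the periodicity required.

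I expect the main obstacle to be the stability claim $\varepsilon^{\pm g}(\mu+\mathfrak b)=\mu+\mathfrak b$, since it is the one place where the defining property of $g(\mathfrak f)$ — that $\varepsilon_\mathfrak f$ is the totally positive unit congruent to $1$ modulo $\mathfrak f$ — genuinely enters; everything else is bookkeeping with the recursion and with the uniqueness of normal forms. One should also pause to confirm that applying the reduced recursion $g$ times really does commute with reduction modulo $1$, which holds precisely because $U^T\in\mathrm{GL}_2(\mathbb{Z})$.
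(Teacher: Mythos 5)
Your proof is correct and follows the route the paper itself intends: the paper states the lemma without proof as an immediate consequence of the recursion $(x_{k+1},y_{k+1})\equiv U^T(x_k,y_k)\pmod 1$ together with the $g(\mathfrak f)$-periodicity of the decomposition datum, and you supply exactly the missing details — the stability of $\mu+\mathfrak b$ under multiplication by $\varepsilon^{\pm g}$ (which is where the definition of $\varepsilon_\mathfrak f$ enters) and the appeal to the uniqueness clause of the cone decomposition. The only micro-point worth adding is that stability under the \emph{negative} power $\varepsilon^{-g}$ needs $\varepsilon^{-1}\mathfrak b\subseteq\mathfrak b$ as well, which follows at once from $\varepsilon^{-1}=a-\varepsilon\in\langle 1,\varepsilon\rangle$.
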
 
\noindent  
To make this more explicit, let $\mathfrak f = (u + v \sqrt{d}) \in I_K$ be a principal ideal. We set \eq{\mu = \frac{u-v\sqrt{d}}{\mathfrak N(\mathfrak f)}} and write $\mu = x\ep + y$ with $x,y \in \mathbb Q$. A straightforward calculation yields: \eq{\label{denominators}x = -\tfrac{2 v}{b \, \mathfrak N(\mathfrak f)} \text{   and   } y = \tfrac{bu+av}{b \, \mathfrak N(\mathfrak f)}.} Thus, the initial pair $(x_0,y_0)$ for the cone decomposition is given by \eq{\label{initialu0}(x_0,y_0) = (\langle x \rangle,[y]).} For $k \geq 0$, the identity \eq{(U^T)^k = \mat{U_k(a)&U_{k-1}(a) \\ - U_{k-1}(a) & -U_{k-2}(a)},} (see formula (\ref{Urecursion})) leads to the explicit formula \eq{\label{generalformula}(x_k,y_k) = (\langle U_k(a) x_0 + U_{k-1}(a) y_0 \rangle,[-U_{k-1}(a) x_0 - U_{k-2}(a) y_0]).}
If we specialise further to $v=0$, we find \eq{(x_k,y_k) = (\langle \tfrac{U_{k-1}(a)}{u} \rangle,[-\tfrac{U_{k-2}(a)}{u}]),} so we only need to understand the behaviour of $U_{k}(a)$ modulo $u$. 
In this case, we have: \eq{\label{initialu}(x_0,y_0) = (1,\frac{1}{u}).}

\subsection{Examples}
\label{examples}
We follow Shintani's original examples from \cite{Shintani1977}.

\begin{example}
For $K = \mathbb Q(\sqrt 5 )$, we have $\ep = \tfrac{3+\sqrt{5}} 2$. For the ideal $\mathfrak f = (4)$, we have $g = 3$ and the decomposition data \eq{\mathcal Z _\mathfrak f = \{(1,\tfrac 1 4),(\tfrac 1 4,0),(\tfrac 3 4 , \tfrac 3 4)\}.} Thus, \eq{X_1(4) = \mathcal S(\ep,\ep+\tfrac 1 4)\mathcal S (\ep,\tfrac \ep 4)\mathcal S(\ep,\tfrac{3 \ep}{4}+\tfrac 3 4),} \eq{X_2(4) = \mathcal S(\ep',\ep'+\tfrac 1 4)\mathcal S (\ep',\tfrac{\ep'} 4)\mathcal S(\ep',\tfrac{3 \ep'} 4 + \tfrac 3 4).} Shintani was able to explicitly compute \eq{X(4) = \left(\tfrac{1+\sqrt{5}}{2}-\sqrt{\tfrac{1+\sqrt{5}}{2}}\right ).} 
\end{example}
 
\begin{example}
Again, for $K=\mathbb Q(\sqrt 5)$, take $\mathfrak f = (4-\sqrt{5})$. Then $g = 5$ and \eq{\mathcal Z _\mathfrak f = \{(\tfrac 2 {11},\tfrac 1 {11}),(\tfrac 7 {11},\tfrac 9 {11}),(\tfrac 8 {11} , \tfrac 4 {11}),(\tfrac 6 {11}, \tfrac 3 {11}),(\tfrac {10} {11}, \tfrac 5 {11})\}.} 
Using different techniques (not based on the double sine function), Shintani showed: \eq{X(4-\sqrt{5}) = \frac 1 2 \left (\tfrac {3+\sqrt{5}} 2 - \sqrt{\tfrac{3 \sqrt{5}-1} 2} \right).}
\end{example}
\begin{example}

For $K = \mathbb Q(\sqrt{21})$, we have $\ep = \tfrac{5+\sqrt{21}}{2}$. For $\mathfrak f = (3)$, we find $g = 3$ and \eq{\mathcal Z _\mathfrak f = \{(1,\tfrac 1 3),(\tfrac 1 3 ,0),(\tfrac 2 3 , \tfrac 2 3)\}.} Shintani was able to evaluate \eq{X(3) = \frac 1 2 \left (\tfrac{1+\sqrt{21}}{2}-\sqrt{\tfrac{3+\sqrt{21}}{2}} \right ).}
\end{example}

\begin{remark}
It is important to note that Shintani’s explicit evaluations in the above examples did not rely on properties of the double sine function $\mathcal S(\omega,z)$! 
\end{remark}

\section{The $q$-Pochhammer function and Shintani's invariant}

\subsection{Results of Shintani and Yamamoto}
A lesser-known result due to Shintani \cite{Shintani1977} is the following product formula for the double sine function:
\begin{theorem}
\label{shintaniproduct}
For $\mathrm{Im}(\tau) > 0$, we have \eq{
\mathcal S(\tau,z) = i^{1/2} e^{\frac {\pi i}{12} (\tau + \frac 1 \tau)} e^{\frac{\pi i}{2} (\frac {z^2}\tau - (1+\frac 1 \tau)z )}\tfrac{\prod_{m \geq 0} (1-e^{2 \pi i (m \tau + z)}) }{\prod_{m \geq 1} (1-e^{2 \pi i (\frac{-m+z} \tau )}) }.}
\end{theorem}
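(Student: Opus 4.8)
The plan is to reduce the identity to the two elementary difference equations satisfied by the double sine and then to check that the proposed product satisfies the same equations, fixing the remaining constant at the very end. Throughout I read the defining relation as $\mathcal S(\tau,z)=\exp\big(\zeta_2'(0,1+\tau-z,(1,\tau))-\zeta_2'(0,z,(1,\tau))\big)$, i.e.\ as the ratio $\Gamma_2(1+\tau-z)/\Gamma_2(z)$ of Barnes double Gamma functions $\log\Gamma_2(z):=\zeta_2'(0,z,(1,\tau))$.

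First I would establish the shift relations for $\mathcal S$. Telescoping the inner sum over $n_1$ gives $\zeta_2(s,z)-\zeta_2(s,z+1)=\sum_{n\geq0}(n\tau+z)^{-s}=\tau^{-s}\zeta_H(s,z/\tau)$, and likewise $\zeta_2(s,z)-\zeta_2(s,z+\tau)=\zeta_H(s,z)$ with the classical Hurwitz zeta $\zeta_H$. Differentiating at $s=0$ and inserting the Lerch evaluations $\zeta_H(0,x)=\tfrac12-x$ and $\zeta_H'(0,x)=\log\Gamma(x)-\tfrac12\log2\pi$, the $\log\tau$ terms cancel in pairs once one assembles $\mathcal S$ from its definition, and the reflection $\Gamma(x)\Gamma(1-x)=\pi/\sin\pi x$ collapses the surviving Gamma factors. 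The outcome is
\eq{\frac{\mathcal S(\tau,z+1)}{\mathcal S(\tau,z)}=\frac{1}{2\sin(\pi z/\tau)},\qquad \frac{\mathcal S(\tau,z+\tau)}{\mathcal S(\tau,z)}=\frac{1}{2\sin(\pi z)}.}

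Next I would denote the right-hand side by $P(\tau,z)$ and verify that it satisfies exactly these two relations. Under $z\mapsto z+1$ the numerator $\prod_{m\geq0}(1-e^{2\pi i(m\tau+z)})$ is invariant, the denominator acquires the extra factor $1-e^{2\pi iz/\tau}$, and the quadratic exponential contributes $-i\,e^{\pi iz/\tau}$; assembling these reproduces $1/(2\sin(\pi z/\tau))$. Under $z\mapsto z+\tau$ the roles are reversed: now the denominator is invariant, the numerator contributes $1/(1-e^{2\pi iz})$, and the exponential yields $-i\,e^{\pi iz}$, giving $1/(2\sin\pi z)$. Both are short index-shift computations. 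Consequently $R(z):=\mathcal S(\tau,z)/P(\tau,z)$ is periodic for the lattice $\mathbb Z+\mathbb Z\tau$; since $\mathrm{Im}(\tau)>0$ this is a genuine lattice, and once one checks that the poles of $\mathcal S$ and of $P$ (both dictated by the sine factors above) cancel, $R$ descends to a holomorphic nonvanishing function on the compact torus and is therefore a constant $c(\tau)$ by Liouville.

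The main obstacle is the determination of $c(\tau)$, since this is exactly the ``$\eta$-like'' constant $i^{1/2}e^{\pi i(\tau+1/\tau)/12}$ that no soft periodicity argument can detect. I would first record the clean reorganisation of the prefactor, namely $i^{1/2}e^{\frac{\pi i}{12}(\tau+\frac1\tau)}e^{\frac{\pi i}{2}(z^2/\tau-(1+\frac1\tau)z)}=\exp\big(\tfrac{\pi i}{2}B_{2,2}(z\mid1,\tau)\big)$, where $B_{2,2}$ is the double Bernoulli polynomial with $\zeta_2(0,z)=\tfrac12 B_{2,2}(z\mid1,\tau)$; this makes manifest that $P$ carries the correct quadratic growth, so that $R$ is indeed periodic and $c(\tau)$ is just the normalisation of $\exp\zeta_2'(0,\cdot)$. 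The reflection $\mathcal S(\tau,z)\mathcal S(\tau,1+\tau-z)=1$, immediate from the definition, together with the analogous identity for $P$ forces $c=\pm1$; to remove the sign (and to verify the reflection of $P$, which is a theta/eta inversion in disguise) I would compare leading asymptotics as $\mathrm{Im}(z)\to+\infty$, where both products tend to $1$ and the known leading term $\log\mathcal S(\tau,z)\sim\tfrac{\pi i}{2}B_{2,2}(z\mid1,\tau)$ matches the prefactor exactly, yielding $c=1$. A fully self-contained alternative would bypass the periodicity argument and compute $\zeta_2'(0,z)$ directly by summing Lerch's formula $\zeta_H'(0,n\tau+z)=\log\Gamma(n\tau+z)-\tfrac12\log2\pi$ over $n\geq0$ after analytic continuation: there the two products and the Bernoulli prefactor emerge from the Stirling expansion of $\log\Gamma$, and the delicate point is once more extracting the exact constant from the regularisation.
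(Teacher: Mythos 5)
You should first note that the paper contains no proof of this statement: it is quoted as a ``(little) known theorem due to Shintani'' and attributed to \cite{Shintani1977}, so there is no internal argument to compare yours against. Your outline is the standard route to the identity: derive the shift relations $\mathcal S(\tau,z+1)/\mathcal S(\tau,z)=1/(2\sin(\pi z/\tau))$ and $\mathcal S(\tau,z+\tau)/\mathcal S(\tau,z)=1/(2\sin(\pi z))$ by telescoping the Barnes sum, check that the right-hand side $P(\tau,z)$ satisfies the same relations (your index-shift computations, producing the factors $-ie^{\pi iz/\tau}$ and $-ie^{\pi iz}$ from the quadratic exponential, are correct), match zeros and poles, and apply Liouville on the torus. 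The reorganisation of the prefactor as $\exp\bigl(\tfrac{\pi i}{2}B_{2,2}(z\mid 1,\tau)\bigr)$ is also correct and is the right lens on where the constant comes from.

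The gap is exactly where you locate it, but it is heavier than your write-up acknowledges, to the point that the argument does not yet close. The reflection $P(\tau,z)P(\tau,1+\tau-z)=1$ is not a routine verification: after applying the Jacobi triple product to the numerator (a theta series in $(z,\tau)$) and to the denominator (the same theta series in $(z/\tau,-1/\tau)$), it is \emph{equivalent} to the modular inversion of $\theta_1$ together with that of $\eta$, and the constant $(-i\tau)^{1/2}$ appearing there is precisely the factor $i^{1/2}e^{\frac{\pi i}{12}(\tau+\frac1\tau)}$ you are trying to determine. Using this reflection to force $c=\pm1$ is therefore circular unless you prove the theta transformation independently (Poisson summation does it, and is the cleanest way to finish). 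Your asymptotic alternative also needs two missing inputs made precise: the two infinite products tend to $1$ only when $z\to\infty$ inside the cone spanned by $-1$ and $\tau$ (you need simultaneously $\mathrm{Im}(z)\to+\infty$ and $\mathrm{Im}(z/\tau)\to+\infty$; along $\mathrm{Im}(z)\to+\infty$ with $\mathrm{Re}(z)$ bounded the denominator product does not converge to $1$), and the expansion $\log\mathcal S(\tau,z)\sim\frac{\pi i}{2}B_{2,2}(z\mid1,\tau)$ valid in that sector is itself a nontrivial consequence of the integral representation of $\zeta_2$, which you import as known rather than derive. Either route can be completed, but one of these two inputs --- the theta/eta inversion via Poisson summation, or the sectorial asymptotics of $\log\Gamma_2$ --- must actually be supplied before the constant, and hence the theorem, is established.
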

\noindent The special function appearing in Shintani's theorem is well-known:
\begin{definition}
The (infinite) $q$-Pochhammer symbol is defined by \eq{(x,y;\tau)_\infty = \prod_{k \geq 0} (1-e^{2 \pi i (k \tau + x\tau+y)})} or, equivalently, in terms of $q = e^{2 \pi i \tau}$ and $\zeta = e^{2 \pi i (x\tau + y)}$, as \eq{ (\zeta;q)_\infty = \prod_{k \geq 0}(1-\zeta q^k).}
%Sometimes, it is also known as q-exponential function $E_q(z) = (z;q)_\infty$.
\end{definition}
\begin{lemma}
\label{symmetries}
The $q$-Pochhammer symbol satisfies the following symmetries: 
\begin{itemize}
\item[1)] $(x,y;\tau +1)_\infty = (x,x+y;\tau)_\infty$,
\item[2)] $(x,y+1;\tau)_\infty = (x,y;\tau)_\infty$.
\end{itemize}
\end{lemma}
\noindent Later, Yamamoto \cite{yamamoto2010factorization} observed:
\begin{proposition}
\label{YamamotoProposition}
For $\ep > 0$, $x,y \in \mathbb R_{\geq 0}$ and $\mathrm{Im}(\tau) > 0$, we have \eq{\mathcal S(\ep,x\ep + y) = \lim_{\tau \to \ep}|\mathcal S(\tau,x\tau + y)| = \lim_{\tau \to \ep} \left \vert \tfrac{(x,y;\tau)_\infty} { (1-y,x;-\frac 1 \tau)_\infty} \right \vert .}
\end{proposition}
\begin{proof}
This follows from the definition (cf., (\ref{doublesine})) and analyticity properties of the double sine function $\mathcal S(\omega,z)$ (see \cite{Shintani1977,ruijsenaars}) .
\end{proof}

\subsection{Modular geodesics}
To make the limit $\tau \to \ep$ from Proposition \ref{YamamotoProposition} precise, we consider the modular geodesic connecting $\ep$ and its Galois conjugate $\ep ' = \ep ^{-1}$ in the upper half-plane $\mathbb H$. Define the matrix \eq{A = \begin{bmatrix} 1 & 1 \\ \ep ' & \ep \end{bmatrix},} which diagonalizes $U$, i.e., \eq{A^{-1}U A = \begin{bmatrix} \ep & 0 \\ 0 & \ep '\end{bmatrix}.} Then, the family \eq{\tau_t = U A \begin{bmatrix}e^{t/2} & 0 \\ 0 & e^{-t/2}\end{bmatrix} \cdot i} describes the modular geodesic from $\ep$ to $\ep '$, satisfying: \eq{\mathrm{Im}(\tau_t)>0 \text{ for all } t \in \mathbb R \text{ and }  \lim _{t\to \pm \infty} \tau_t = \ep^{\pm 1}.}
Let us now discretize the variable $t$ by setting, for $n \in \mathbb Z$, \eq{t_n = \mathrm{ln}(\ep^{n}).} This leads to 
\begin{lemma}
The sequence \eq{\tau_n = \tau_{t_n} = \tfrac{T_{n+1}(a) + i b \sqrt{d}}{T_n(a)},} for $n \in \mathbb Z$, gives a discrete approximation along the modular geodesic connecting $\ep$ and $\ep'$, where $T_n(x)$ are the Chebyshev polynomials of the first kind.
\end{lemma}
\begin{proof}
We have $UA = \mat{\ep & \ep' \\ 1 & 1}$ and $\mat{\ep^{n/2}&0\\0&\ep^{-n/2}} \cdot i = i \ep^n$, which leads to \eq{\tau_n = \mat{ \ep & \ep' \\ 1 & 1} \cdot i \ep^n = \tfrac{i\ep^{n+1}+\ep'}{i\ep^n+1} = \tfrac{(1-i\ep^n)(i\ep^{n+1}+\ep^{-1})}{\ep^n T_n(a)}=\tfrac{T_{n+1}(a)+i(\ep-\ep')}{T_n(a)}.}

\end{proof}

\begin{lemma}\label{LemmaAction}For all $k \in \mathbb Z$, we have the shift relation
\eq{U^{k/2} \cdot \tau_n = \tau_{n+k}.} 
\end{lemma}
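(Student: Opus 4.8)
The plan is to first establish a continuous version of the identity and then specialise to the discretised parameter $t_n$. Everything flows from the diagonalisation $A^{-1}UA = D$ with $D = \mathrm{diag}(\varepsilon,\varepsilon')$ recorded above, which I rewrite as the intertwining relation $UA = AD$ and iterate to $U^{k+1}A = AD^{k+1}$ for every $k \in \mathbb Z$.

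First I would record two elementary facts about the diagonal flow. Writing $D_t = \mathrm{diag}(e^{t/2},e^{-t/2})$ for the matrix appearing in the definition of $\tau_t$, all of $D$, $D_t$ and $D_s$ commute, being diagonal; and since $\varepsilon' = \varepsilon^{-1}$ we have $D = \mathrm{diag}(\varepsilon,\varepsilon^{-1})$, so that $D_{2k\ln(\varepsilon)} = \mathrm{diag}(\varepsilon^{k},\varepsilon^{-k}) = D^{k}$. These are purely multiplicative identities between diagonal matrices.

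The computation itself is then immediate. Using that the Möbius action is a left action, I expand $U^{k}\cdot\tau_t = U^{k}\cdot(UAD_t\cdot i) = U^{k+1}AD_t\cdot i$, and applying $U^{k+1}A = AD^{k+1}$ turns the left-hand side into $AD^{k+1}D_t\cdot i$. On the other hand $\tau_{t+2k\ln(\varepsilon)} = UAD_{t+2k\ln(\varepsilon)}\cdot i = UAD_t D_{2k\ln(\varepsilon)}\cdot i = UAD_t D^{k}\cdot i$, and one more application of $UA = AD$ together with the commutativity of the diagonal factors yields $AD^{k+1}D_t\cdot i$ as well. Comparing the two expressions shows $U^k \cdot \tau_t = \tau_{t+2k\ln(\varepsilon)}$ for all $t$. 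Finally, specialising to $t = t_n = \ln(\varepsilon^n) = n\ln(\varepsilon)$ gives $t_n + 2k\ln(\varepsilon) = (n+2k)\ln(\varepsilon) = t_{n+2k}$, whence $U^k\cdot\tau_n = \tau_{n+2k}$, as desired.

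I do not anticipate a genuine obstacle, since the argument is formal once the diagonalisation is in hand; the care needed is entirely in the bookkeeping. The one point worth emphasising is the factor of $2$: because $D_t$ carries the half-exponents $e^{\pm t/2}$, it is $D_{2k\ln(\varepsilon)}$ rather than $D_{k\ln(\varepsilon)}$ that equals $D^{k}$, and this is precisely what produces the shift $n\mapsto n+2k$ instead of $n\mapsto n+k$. One should also verify that every matrix involved acts by a genuine Möbius transformation, which holds since $\det U = 1$, $\det D_t = 1$, $\det D^{k} = \mathfrak N(\varepsilon)^{k} = 1$ as $\varepsilon$ is a totally positive unit, and $\det A = \varepsilon - \varepsilon' \neq 0$.
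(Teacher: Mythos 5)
Your proof is correct. The paper itself offers no argument for this lemma beyond the phrase ``a straightforward calculation shows''; the calculation it presumably has in mind goes through the explicit formula $\tau_n = (T_{n+1}(a)+i\sqrt d)/T_n(a)$ of the preceding lemma, checking directly that $U\cdot\tau_n = a - 1/\tau_n = \tau_{n+2}$ via the Chebyshev identity $T_nT_{n+2}-T_{n+1}^2 = a^2-4 = (\varepsilon-\varepsilon')^2$, and then iterating. You instead work upstream of the discretisation: from $UA=AD$ you get $U^{k+1}A = AD^{k+1}$, and the key observation $D_{2k\ln\varepsilon} = \mathrm{diag}(\varepsilon^k,\varepsilon^{-k}) = D^k$ (valid precisely because $\varepsilon'=\varepsilon^{-1}$) lets you show the continuous statement $U^k\cdot\tau_t = \tau_{t+2k\ln\varepsilon}$ before specialising $t=t_n$. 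This is a genuinely different and arguably cleaner route: it explains conceptually where the factor of $2$ in the shift $n\mapsto n+2k$ comes from (the half-exponents in the geodesic flow) and never touches the Chebyshev polynomials, whereas the paper's route requires the explicit closed form of $\tau_n$ but stays entirely within elementary polynomial identities. Your closing remarks on determinants are correct and appropriately address the only point where the formal manipulation could conceivably go wrong.
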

\begin{proof}
We have $\mat{\ep^{1/2}&0 \\ 0 & \ep^{-1/2}} = A^{-1}U^{1/2}A$, which implies $U^{k/2}A = A \mat{\ep^{k/2}&0\\0&\ep^{-k/2}}$. Thus \eq{U^{k/2} \cdot \tau_n = U^{k/2} U A \mat{\ep^{n/2}&0\\0&\ep^{-n/2}}\cdot i = UA\mat{\ep^{(n+k)/2}&0\\0&\ep^{-(n+k)/2}}\cdot i = \tau_{n+k}.}
\end{proof}
\begin{remark}Note that $\lim _{n \to \pm \infty} \tfrac{T_{n+k}(a)}{T_n(a)} = \ep ^{\pm k}.$
\end{remark}
\subsection{A new formula for Shintani's invariants}
We now state our main result, which generalizes Yamamoto’s observation \cite{yamamoto2010factorization}.
\begin{theorem}
\label{maintheorem}
Under the previous assumptions, let $\mathfrak f = (u+v\sqrt{d}) \in I_K$ be a principal ideal, with $g = g(\mathfrak f)$, and let  $(x_0,y_0) = (\langle x_{\mathfrak f}\rangle,[y_{\mathfrak f}])$ be the initial data (cf., (\ref{initialu0})). Then: \eq{\label{expression0}X_1(\mathfrak f) = \lim_{n\to \infty}\left\vert \tfrac{(x_0,y_0;\tau_{n})_\infty}{(x_0,y_0;\tau_{n+2g})_\infty} \right\vert \text{ and } X_2(\mathfrak f) = \lim_{n\to \infty}\left\vert \tfrac{(x_0,y_0;\tau_{-n})_\infty}{(x_0,y_0;\tau_{-n+2g})_\infty} \right\vert.}
\end{theorem}
\begin{proof}

The idea is to exploit Proposition \ref{YamamotoProposition}, in order to obtain a description of the form \eq{\mathcal S(\ep,x_k\ep+y_k)=\lim_{n\to\infty}\left\vert \tfrac{(x_k,y_k;\tilde\tau_k)_\infty}{(x_{k-1},y_{k-1};\tilde\tau_{k-1})_\infty}\right\vert.} For this, we define \eq{\tilde \tau_k = \tilde \tau _{n,k} =  U^{g-k}\cdot \tau_{n}.} Then, we have \eq{\tilde \tau_{k-1} = U \cdot \tilde \tau_{k} = a - \frac {1} {\tilde \tau_k} \text{ \ \ \   and \ \ \    }\lim_{n\to \infty}\tilde \tau_k = \ep,} which means (using Proposition \ref{YamamotoProposition}) \eq{\mathcal S(\ep,x_k\ep+y_k) = \lim_{n\to \infty} \left \vert \tfrac{(x_k,y_k;\tilde \tau_k)_\infty}{(1-y_k,x_k;-\tfrac{1}{\tilde \tau _k})_\infty} \right \vert .} Using Lemma \ref{symmetries} and the recursion $(x_k,y_k) = (\langle a x_{k-1}+y_{k-1}\rangle,1-x_{k-1})$ (see (\ref{xyrecursion})), we obtain the following key observation for the denominator \begin{align}(1-y_k,x_k;-\tfrac{1}{\tilde \tau _k})_\infty &= (1-y_k,x_k-a(1-y_k);a-\tfrac{1}{\tilde \tau_k})_\infty \\ &= (x_{k-1},\langle a x_{k-1}+y_{k-1}\rangle-a x_{k-1};\tilde\tau_{k-1})_\infty \\&=(x_{k-1},y_{k-1};\tilde \tau _{k-1})_\infty.\end{align}
Using $(x_0,y_0)=(x_g,y_g)$, $U^g \cdot \tilde\tau_g = \tilde \tau_0$ and $\tilde\tau_g = \tau_n$, we get \begin{align}X_1(\mathfrak f) &= \prod_{k=1}^{g} \mathcal S(\ep,x_k\ep+y_k) = \lim_{n\to \infty}\left \vert\prod_{k=1}^{g} \tfrac{(x_k,y_k;\tilde\tau_k)_\infty}{(x_{k-1},y_{k-1};\tilde\tau_{k-1})_\infty} \right \vert \\ &= \lim_{n\to\infty}\left\vert \tfrac{(x_g,y_g;\tilde\tau_g)_\infty}{(x_0,y_0;\tilde\tau_0)_\infty} \right\vert = \lim_{n\to\infty}\left \vert \tfrac{(x_g,y_g;\tilde\tau_g)_\infty}{(x_g,y_g;U^{g}\cdot \tilde\tau_g)_\infty}\right\vert \\ &= \lim_{n\to \infty}\left\vert \tfrac{(x_0,y_0;\tau_n)_\infty}{(x_0,y_0;\tau_{n+2g})_\infty} \right\vert.\end{align}
Moreover, setting \eq{\tilde\tau_k' = \tilde\tau_{n,k}' = U^{g-k}\cdot \tau_{-n},} we have \eq{\tilde\tau_{k-1}' = U \cdot \tilde \tau_k' \ \ \  \text{ and } \ \ \  \lim_{n\to \infty}\tilde\tau_k' = \ep',} showing with same argument (verbatim) \eq{X_2(\mathfrak f) = \lim_{n\to \infty}\left\vert \tfrac{(x_0,y_0;\tau_{-n})_\infty}{(x_0,y_0;\tau_{-n+2g})_\infty} \right\vert.}
\end{proof}
\begin{remark}
A general version of this theorem, for general $\mathfrak f$ and $\mathfrak C$, is available. For clarity, we only consider the case where $\mathfrak f$ is a principal ideal. 
\end{remark}

\begin{corollary}
\label{corollary1}
 Under the previous assumptions, for all $k \in \{0,\dots,g-1\}$, the invariant satisfies \eq{\label{formula2}X(\mathfrak f) = \lim_{n\to\infty} \left\vert \tfrac{(x_k,y_k;\tau_n)_\infty(x_k,y_k;\tau_{-n})_\infty}{(x_k,y_k;\tau_{n+2g})_\infty(x_k,y_k;\tau_{-n+2g})_\infty} \right\vert=\lim_{n\to \infty}\left\vert \tfrac{(x_k,y_k;\tau_{n-g})_\infty(x_k,y_k;\tau_{-n-g})_\infty}{(x_k,y_k;\tau_{n+g})_\infty(x_k,y_k;\tau_{-n+g})_\infty} \right\vert.}
\end{corollary}
\begin{proof}
The first equality simply comes from the shift invariance $(x_k,y_k)\mapsto (x_{k+1},y_{k+1})$ of the Shintani invariant $X(\mathfrak f)$. The second equality comes from the shift $n\mapsto n-g$.
\end{proof}
\begin{corollary}
\label{corollary2}
Under the the previous assumptions, if $\mathfrak f = (u) \in I_K$, and we define $q_n = e^{2\pi i \tau_n}$, $\zeta_u = e^{2 \pi i / u}$, then: \eq{\label{formula3}X(\mathfrak f) = \lim_{n\to\infty}\left\vert\tfrac{(\zeta_u;q_n)_\infty(\zeta_u;q_{-n})_\infty}{(\zeta_u;q_{n+2g})_\infty(\zeta_u;q_{-n+2g})_\infty}\right\vert= \lim_{n\to\infty}\left\vert\tfrac{(\zeta_u;q_{n-g})_\infty(\zeta_u;q_{-n-g})_\infty}{(\zeta_u;q_{n+g})_\infty(\zeta_u;q_{-n+g})_\infty}\right\vert.}
\end{corollary}
\begin{proof}
This follows from $(x_0,y_0) = (1,\tfrac 1 u)$, see (\ref{initialu}).
\end{proof}
\noindent
The formulas (\ref{expression0}), (\ref{formula2}) and (\ref{formula3}) contain two different $q$ parameters. We want to show that we can express $X(\mathfrak f)$ in terms of a single $q$ parameter:
\begin{corollary}
\label{corollary3}
Continuing with the same assumptions, let $\mathfrak q = e^{- 2 \pi \sqrt{d}}$, $\zeta_0 = e^{2 \pi i (x_0\ep+y_0)}$, $T_n = T_n(a)$  and \eq{\zeta_{n,r} = \zeta_0 e^{2 \pi i r\frac { T_{n+1}}{T_n}  }e^{-2\pi b \sqrt{d}\frac{r}{T_n}},} for $0\leq r < T_n$. Then, we have
\eq{\label{expression2}X_1(\mathfrak f) = \lim_{n\to \infty} \left \vert \tfrac{\prod_{r=0}^{T_{n-g}} (\zeta_{n-g,r};\mathfrak q)_\infty} {\prod_{r=0}^{T_{n+g}} (\zeta_{n+g,r};\mathfrak q)_\infty} \right \vert \text{ and } X_2(\mathfrak f) = \lim_{n\to \infty} \left \vert \tfrac{\prod_{r=0}^{T_{n+g}} (\zeta_{-n-g,r};\mathfrak q)_\infty} {\prod_{r=0}^{T_{n-g}} (\zeta_{-n+g,r};\mathfrak q)_\infty} \right \vert .}

\end{corollary}
\begin{proof}
This follows from the multiplicative dependence of $q_{n-g} = e^{2 \pi i \tau_{n-g}}$ and $q_{n+g}= e^{2 \pi i \tau_{n+g}}$, as  \eq{ \mathfrak q = q_{n-g}^{T_{n-g}} = q_{n+g}^{T_{n+g}}.} From \eq{ (\zeta_0;q_{n})_\infty =  \prod_{k\geq 0}(1-\zeta_0q_{n}^k) = \prod_{s\geq 0}\prod_{r=0}^{T_{n}-1} (1-\zeta_0 q_n^{sT_{n}}q_{n}^{r}) = \prod_{s\geq0} \prod_{r=0}^{T_{n}-1}(1-\mathfrak q^s \zeta_{n,r}) = \prod_{r=0}^{T_{n} -1}(\zeta_{n,r};\mathfrak q)_\infty,} where $k = s T_n + r$, with $0\leq r < T_{n}$, the result follows. 
\end{proof}
\subsection{Examples}
The examples from section \ref{examples} can now be rewritten:
\begin{example}
For $K = \mathbb Q(\sqrt{5})$ and $\mathfrak f = (4)$, we have $a=3$, $g=3$ and 
\eq{X(4) = \lim_{n\to \infty} \left \vert \tfrac{(\zeta_4;q_{n-3})_\infty (\zeta_4;q_{-n-3})_\infty}{(\zeta_4;q_{n+3})_\infty (\zeta_4;q_{-n+3})_\infty} \right\vert .}
\end{example}
\begin{example}
Again, for $K = \mathbb Q(\sqrt{5})$, $\mathfrak f = (4-\sqrt{5})$, $g=5$ and using $(x_0,y_0) = (2/11,1/11)$, we get: 
\eq{X(4-\sqrt{5}) = \lim_{n\to \infty} \left \vert \tfrac{(2/11,1/11;q_{n-5})_\infty (2/11,1/11;q_{-n-5})_\infty}{(2/11,1/11;q_{n+5})_\infty (2/11,1/11;q_{-n+5})_\infty} \right\vert .}
\end{example}
\begin{example}
For $K = \mathbb Q(\sqrt{21})$, $\mathfrak f = (3)$, $a=5$ and $g=3$, we obtain: 
\eq{X(3) = \lim_{n\to \infty} \left \vert \tfrac{(\zeta_3;q_{n-3})_\infty (\zeta_3;q_{-n-3})_\infty}{(\zeta_3;q_{n+3})_\infty (\zeta_3;q_{-n+3})_\infty} \right\vert .}
As expected, this is in agreement with Yamamoto's original calculation of $X(3)$ in \cite{yamamoto2010factorization}.
\end{example}
\begin{remark}
It remains a highly nontrivial problem to recover Shintani’s explicit evaluations of the above examples purely from the properties of the $q$-Pochhammer symbol.
\end{remark}
\appendix
\section{Chebyshev polynomials of the first and second kind}
\noindent 
For $n \in \mathbb Z$, the (normalized) $n$-th Chebyshev polynomial of the first kind $T_n(x) \in \mathbb Z[x]$ is defined by the identity \eq{T_n(x+x^{-1}) = x^n+x^{-n}.} Similarly, the (normalized) $n$-th Chebyshev polynomial of the second kind  $U_n(x) \in \mathbb Z[x]$ is defined via: \eq{U_{n-1}(x+x^{-1}) = \tfrac{x^n-x^{-n}}{x-x^{-1}}.}
In particular, we see \eq{T_n(a) = T_n(\ep+\ep^{-1})=\ep^n +\ep^{-n} \in \mathbb N,} \eq{U_n(a) = U_n(\ep+\ep^{-1}) =\tfrac{\ep^{n}-\ep^{-n}}{\ep-\ep^{-1}}\in \mathbb N.}
These polynomials satisfy the standard identities:
\begin{itemize}
\item Multiplicative identities:
\eq{\label{ChebRelations}T_n(x)T_m(x) = T_{n+m}(x) + T_{n-m}(x), \forall n,m \in \mathbb Z , }
\eq{\label{Urecursion}U_n(x)U_m(x) = \sum_{j=0}^m U_{n-m+2j}(x), \forall n,m \in \mathbb Z .} 
\item Recurrence relations:
\eq{T_{n+1}(x) = x T_n(x) - T_{n-1}(x), \forall n \in \mathbb Z,}
\eq{U_{n+1}(x) = x U_{n}(x)-U_{n-1}(x), \forall n\in \mathbb Z.}
\item Symmetry:
\eq{T_n(x) = T_{|n|}(x), \forall n\in \mathbb Z,}
\eq{U_{-1 + n}(x) = -U_{-1-n}(x), \forall n\in \mathbb Z_{\geq 0}.}
\end{itemize}
\begin{example}
We have
\begin{itemize}
\item[] $T_{-2}(x) = x^2-2$, $T_{-1}(x)=x$, $T_0(x) = 2$, $T_1(x)=x$, $T_2(x) = x^2-2$, $T_3(x) = x^3-3x$,
\item[] $U_{-2}(x) = -1$, $U_{-1}(x) = 0$, $U_0(x) = 1$, $U_1(x) = x$, $U_2(x) = x^2-1$, $U_3(x) = x^3-2x$.
\end{itemize}
\end{example}

\section{A result about $g(p)$}

\begin{lemma}
\label{epsilonf}
Let $p \in \mathbb N$ be a prime number not dividing the discriminant $\Delta$ of $K$. Then, with the usual notation for the Jacobi symbol, we have: \eq{g(p) \ \vert \ p-\legendre{\Delta}{p}.}
\end{lemma}
\begin{proof}
Recall $\ep = \tfrac{a + b\sqrt{d} } 2$, with $a,b \in \mathbb N$, and set $l = p-\legendre{\Delta}{p}$. When we write $e\equiv f$, we always mean $e \equiv f \text{ mod $p$}$. \\ \\
Let us start with the case $(p)$ is non-split, i.e., in particular $\legendre{\Delta}{p} = -1$. We need to show that \eq{p \ | \ 2^{p+1}(\ep^{p+1}-1).} As \eq{\binom{p}{k} \equiv \left\{ \begin{array}{c} 0 \text{ , if $0 < k < p$} \\  \  1 \text{ , if $k \in \{0,p\}$}  \end{array}  \right. ,} we have \eq{2^{p+1} \ep^{p+1} = (a+b\sqrt{d} )^p(a+b\sqrt{d} ) = a^{p+1}+b^{p+1} d^{(p+1)/2} +(a^p b + a b^p d^{(p-1)/2} ) \sqrt{d}.} From the assumptions, we know \eq{d^{(p+1)/2} \equiv -d \text{ and } d^{(p-1)/2} \equiv -1 .} Moreover, using Fermat's little theorem, we see \eq{a^{p+1}+b^{p+1} d^{(p+1)/2} \equiv a^2 -d b^2 \equiv 4} and \eq{a^p b + a b^p d^{(p-1)/2} \equiv ab - ab = 0.} This gives \eq{2^{p+1}\ep^{p+1} \equiv 4 } and thus $p \ \vert \ 2^{p+1}(\ep^{p+1}-1)$. \\ \\
Now we look at the case where $(p)$ splits. In this case we need to show that \eq{p \ \vert \ (\ep^{l}-1) (\ep^{-l}-1) = 2 - (\ep^{l} + \ep^{-l}) = 2- T_l(a).} We calculate \eq{2^l \, T_l(a) = 2 \sum_{k=0}^{l/2} \binom{l}{2k}a^{2k} (b^2d)^{l/2-k}.} If $\legendre{\Delta}{p} = 1$, there exists $1\leq r \leq l/2$ such that $r^2 \equiv d$. Using $\binom{p-1}{2k} \equiv 1 $ for $0 \leq k \leq l/2$ and Fermat's little theorem, we obtain \eq{2^l \, T_l(a) \equiv T_l(a) \equiv 2 (br)^l \sum_{k=0}^{l / 2}(a^2b^{-2}d^{-1})^k \equiv 2 \tfrac{(a^2b^{-2}d^{-1})^{l+1}-1}{(a^2b^{-2}d^{-1})-1} \equiv 2 \tfrac{(a^2b^{-2}d^{-1})-1}{(a^2b^{-2}d^{-1})-1} = 2.} If $\legendre{\Delta}{p}=-1$, we have again $d^{l/2}\equiv -d $. From \eq{\binom{p+1}{k} \equiv \left\{ \begin{array}{c} 0 \text{ ,if $2 \leq k \leq p-1$} \\  \  1 \text{, if $k \in \{0,1,p,p+1\}$}  \end{array}  \right. ,} together with Fermat's little theorem, we get \eq{2^l \, T_l(a) \equiv 2(a^l+b^l d^{l/2}) \equiv 2(a^2 - d b^2 ) = 8 .} Thus, we get $p \ \vert \ T_l(a) - 2$, finishing the proof.
\end{proof}

\bibliographystyle{plain}
\bibliography{shintani_values}

\end{document}